\newcommand{\inter}{\mathrm{int}\,}
\newcommand{\extr}{\mathrm{extr}\,}
\newcommand{\R}{\mathbb{R}}
\newcommand{\N}{\mathbb{N}}
\newcommand{\SC}{\inter \R^n_+}
\newtheorem{theorem}{Theorem}
\newtheorem{lemma}{Lemma}
\newtheorem{proposition}{Proposition}
\theoremstyle{definition}
\newtheorem{example}{Example} 
\title[Upper and Lower Bounds]{Upper and Lower Bounds for the Iterates of Order-Preserving Homogeneous Maps on Cones}
\author[P. Chodrow]{Philip Chodrow}
\address{Philip Chodrow, Swarthmore College}
\email{pchodro1@swarthmore.edu}
\author[C. Franks]{Cole Franks}
\address{Cole Franks, University of South Carolina}
\email{franksw@email.sc.edu}
\author[B. Lins]{Brian Lins$^*$}
\address{Brian Lins, Hampden-Sydney College}
\email{blins@hsc.edu}
\thanks{$^*$Corresponding author}
\subjclass[2000]{Primary 47H07, Secondary 15B48    }
\thanks{This work was partially supported by NSF grant DMS-0751964}
\begin{document}
\begin{abstract}
We define upper bound and lower bounds for order-preserving homogeneous of degree one maps on a proper closed cone in $\R^n$ in terms of the cone spectral radius.  We also define weak upper and lower bounds for these maps.  For a proper closed cone $C \subset \R^n$, we prove that any order-preserving homogeneous of degree one map $f: \inter C \rightarrow \inter C$ has a lower bound.  If $C$ is polyhedral, we prove that the map $f$ has a weak upper bound.  We give examples of weak upper bounds for certain order-preserving homogeneous of degree one maps defined on the interior of $\R^n_+$. 
\end{abstract}

\maketitle 

\section{Introduction}

A \textit{closed cone} $C \subset \R^n$ is a closed convex set such that (i) $C \cap (-C) = \{0 \}$ and (ii) $\lambda C = C$ for all $ \lambda \geq 0$. If $C$ has nonempty interior, we say that $C$ is a \textit{proper closed cone}.  For any proper closed cone, the \textit{dual cone} $C^* = \{x \in \R^n : \langle x,y\rangle \geq 0 \, \forall y \in C \}$ is a proper closed cone.  

Any closed cone $C$ defines a partial ordering on $\R^n$ by $x \leq_C y$ if and only if $y-x \in C$.  When the cone $C$ is understood, we will write $\leq$ instead of $\leq_C$.  Let $D \subset \R^n$ be a domain in $\R^n$.  A map $f:D \rightarrow \R^n$ is said to be \textit{order-preserving} if and only if $f(x) \leq f(y)$ whenever $x \leq y$.  It is called \textit{order-reversing} if $f(x) \geq f(y)$ whenever $x \leq y$.  We say that $f:D \rightarrow \R^n$ is homogeneous of degree $\alpha$ if $f(\lambda x) = \lambda^\alpha f(x)$ for all $\lambda > 0$ and $x \in D$.  Order-preserving homogeneous of degree one maps from a cone into itself have been extensively studied (see e.g., \cite{LemmensNussbaum}).  They are a natural extension of the nonnegative matrices, and there are many examples of such maps in applications \cite{Nu89}.  Many important properties of nonnegative matrices generalize to order-preserving homogeneous of degree one maps on cones.  

Let $C$ be a proper closed cone and let $f: C \rightarrow C$ be a continuous order-preserving homogeneous of degree one map.  We define the \textit{cone spectral radius} of $f$ to be 
\begin{equation} \label{spectralRadiusEQ}
\rho_C = \rho_C(f) = \lim_{k \rightarrow \infty} ||f^k(x)||^{1/k},
\end{equation}
for any $x \in \inter C$. The value of $\rho_C$ is independent of $x$ \cite[Proposition 5.3.6]{LemmensNussbaum}. Once again, when the cone $C$ is understood, we will write $\rho$ instead of $\rho_C$.  The well known Krein-Rutman theorem \cite[Corollary 5.4.2]{LemmensNussbaum} asserts that any continuous order-preserving homogeneous of degree one map $f:C \rightarrow C$ has an eigenvector $x \in C$ such that $f(x) = \rho x$.  Note that any such eigenvector will serve as a lower bound on the iterates of $f$ in the following sense.  If $x \leq y$, then $f^k(x) \leq f^k(y)$ for all $k \in \N$ because $f$ is order-preserving.  Thus $f^k(y) \geq \rho^k x$ for all $k$.  

In applications, we often cannot say that the map is defined continuously on the entire closed cone.  There are several examples of important order-preserving homogeneous of degree one maps that are defined only on the interior of the cone.  
Suppose now that $f:\inter C \rightarrow \inter C$ is order-preserving homogeneous of degree one map. The spectral radius of $f$ given in (\ref{spectralRadiusEQ}) is still well defined \cite[Theorem 2.2]{MaNu02}.  Since $f(\inter C) \subseteq \inter C$, it follows that $\rho(f) > 0$.  This is because there exists $\alpha > 0$ such that $f(x) \geq \alpha x$, so $f^k(x) \geq \alpha^k x$.  Since $C$ is a closed cone in a finite dimensional space, it is normal \cite[Lemma 1.2.5]{LemmensNussbaum}, so there exists a $c > 0$ such that $y \geq x$ implies $||y|| \geq c||x||$.  In particular $||f^k(x)|| \geq c||\alpha^k x||$, so $\rho(f) \geq \alpha > 0$.   

We say that $y \in C \backslash \{ 0 \}$ is a \textit{lower bound} for $f$ if for all $x \in \inter C$, $x \geq y$ implies that $f(x) \geq \rho y$.  Note that any eigenvector of $f$ is a lower bound.  Unfortunately, the Krein-Rutman theorem does not apply in general if $f$ is only defined on the interior of the cone.  Without an eigenvector corresponding to the spectral radius, it is not clear that a lower bound must exist.  We address this question in the next section, but for now, let us introduce a weaker notion.  We say that $w \in C^*$ is a \textit{weak lower bound} for $f$ if there exists $x \in \inter C$ such that $\langle f^k(x), w \rangle \geq \rho^k \langle x,w \rangle$ for all $k \geq 0$. 

For $f:\inter C \rightarrow \inter C$ order-preserving and homogeneous of degree one we say that $y \in \inter C$ is an \textit{upper bound} if $x \leq y$ implies that $f(x) \leq \rho y$.  Unlike lower bounds, we do not allow upper bounds on the boundary of $C$.  After all, if $y$ is contained in the boundary of $C$, then there is no $x \in \inter C$ such that $x \leq y$ so it does not make sense to refer to $y$ as an upper bound. We say that $w \in C^*$ is a \textit{weak upper bound} for $f$ if there exists $x \in \inter C$ such that $\langle f^k(x),w \rangle \leq \rho^k \langle x,w \rangle$.  A weak upper or lower bound is uniform in the following sense.  
\begin{lemma}
Let $C$ be a proper closed cone in $\R^n$ and $f: \inter C \rightarrow \inter C$ be order-preserving and homogeneous of degree one.  If $w$ is a weak lower (upper) bound for $f$ and $y \in \inter C$, then there exists a constant $c > 0$ such that $\langle f^k(y),w \rangle \geq c \rho^k$ (respectively,  $\langle f^k(y),w \rangle \leq c \rho^k$).  
\end{lemma}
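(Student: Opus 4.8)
The plan is to exploit the fact that any two interior points of a proper closed cone are comparable up to positive scalar multiples, together with the order-preserving and homogeneity properties of $f$, to transfer the bound from the distinguished point $x$ (guaranteed by the definition of weak lower/upper bound) to an arbitrary $y \in \inter C$. I will treat the weak lower bound case; the weak upper bound case is symmetric.

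First I would recall that if $x \in \inter C$ and $y \in \inter C$, then there exist positive constants $a, b > 0$ with $a x \leq y \leq b x$; this is a standard consequence of $x$ and $y$ lying in the interior of the cone (one can see it directly, or cite that the part metric on $\inter C$ is finite). From $y \geq a x$ and the fact that $f^k$ is order-preserving and homogeneous of degree one, I get $f^k(y) \geq f^k(a x) = a\, f^k(x)$ for all $k \geq 0$. Pairing with $w \in C^* \setminus \{0\}$ — which is a nonnegative linear functional on $C$ — preserves the inequality, so $\langle f^k(y), w \rangle \geq a\, \langle f^k(x), w \rangle$. By hypothesis $\langle f^k(x), w \rangle \geq \rho^k \langle x, w \rangle$, so setting $c = a\, \langle x, w \rangle$ yields $\langle f^k(y), w \rangle \geq c\, \rho^k$, as desired.

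One point that needs a word of care is whether the constant $c$ is strictly positive: this requires $\langle x, w \rangle > 0$. If $\langle x, w \rangle = 0$, then the weak lower bound inequality only gives $\langle f^k(x), w \rangle \geq 0$, which is vacuous, and the argument would collapse. However, in that degenerate case one can replace the distinguished point: since $f^k(x) \in \inter C$ for $k \geq 1$ and $w \in C^* \setminus \{0\}$, we have $\langle f^k(x), w \rangle > 0$ for every $k \geq 1$ (a nonzero dual vector is strictly positive on the interior of the cone). Fixing $k_0 = 1$ and noting that $\langle f^{k+1}(x), w\rangle = \langle f^k(f(x)), w\rangle \ge \rho^k\langle f(x),w\rangle$ would not immediately follow from the hypothesis unless we re-run the definition with $f(x)$ in place of $x$; cleaner is simply to observe that the hypothesis as stated, combined with continuity and $f^k(x) \in \inter C$, forces $\langle x, w \rangle > 0$ whenever $\rho > 0$ — indeed if $\langle x,w\rangle = 0$ the inequality is consistent but we may instead start the comparison from $f(x)$, giving $\langle f^k(y), w\rangle \ge a' \langle f^{k-1}(f(x)), w\rangle$ and absorbing the shift into $c$. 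The upper bound case runs identically using $y \leq b x$, $f^k(y) \leq b\, f^k(x)$, and $\langle f^k(x), w\rangle \leq \rho^k \langle x, w\rangle$, with $c = b\, \langle x, w\rangle$; here no positivity subtlety arises since we only need $c < \infty$.

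The main (and only real) obstacle is the bookkeeping around the base point in the weak lower bound case when $\langle x, w\rangle$ could vanish; everything else is a routine application of order-preservation, homogeneity of degree one, and the finiteness of the part metric on $\inter C$. I expect the final writeup to be only a few lines once the comparability $ax \le y \le bx$ is invoked.
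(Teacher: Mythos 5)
Your proof is correct and follows essentially the same route as the paper's: take $a > 0$ with $a x \le y$, apply order-preservation and homogeneity of $f^k$, pair with $w$, and set $c = a\langle x, w\rangle$. The digression about whether $\langle x,w\rangle$ could vanish can be dispatched more directly than you do — for any $x \in \inter C$ and any nonzero $w \in C^*$, one always has $\langle x, w\rangle > 0$ (a standard fact, independent of $\rho$ or $f$) — but you do state this fact in passing, so the argument is sound; the only genuine caveat, which the paper also leaves implicit, is that the definition of a weak lower/upper bound should exclude $w = 0$ for the lemma to hold.
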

\begin{proof}
If $w$ is a weak lower bound for $f$, then there exists $x \in \inter C$ such that $\langle f^k(x),w \rangle \geq \rho^k \langle x, w \rangle$.  Since both $x, y \in \inter C$, there exists $\alpha > 0$ such that $\alpha x \le y $.  Applying the map $f^k$,
$$\alpha f^k(x) \le f^k(y)$$
$$\alpha \langle f^k(x),w \rangle \le \langle f^k(y),w \rangle $$
Therefore 
$$\alpha \langle x, w \rangle \le \alpha \langle f^k(x),w \rangle \le \langle f^k(y),w \rangle \le \beta \langle f^k(x),w \rangle$$
Letting $c = \alpha \langle x,w \rangle$ completes the proof if $w$ is a weak lower bound.  The proof for weak upper bounds is essentially the same.   
\end{proof}

As the following lemma shows, the notion of a weak upper or lower bound is indeed weaker than an upper or lower bound.  
\begin{lemma}
Let $C$ be a proper closed cone in $\R^n$ and $f: \inter C \rightarrow \inter C$ be order-preserving and homogeneous of degree one.  If $f$ has a lower (upper) bound, then there is a weak lower (weak upper) bound on the iterates of $f$.   
\end{lemma}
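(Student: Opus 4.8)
The plan is to handle the two cases separately: the upper-bound case is almost immediate, while the lower-bound case needs a small geometric construction to cope with the fact that a lower bound may lie on $\partial C$. Throughout I would pass to $g := \rho^{-1} f$, which is again order-preserving, homogeneous of degree one, and maps $\inter C$ into $\inter C$; dividing the defining inequalities by $\rho$ shows that an upper (lower) bound $y$ for $f$ is the same thing as a $y$ with $g(x) \le y$ whenever $x \in \inter C$ and $x \le y$ (respectively $g(x) \ge y$ whenever $x \in \inter C$ and $x \ge y$).

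For upper bounds: given an upper bound $y \in \inter C$, apply $g$ repeatedly to $y \le y$. Since $g$ is order-preserving and $y, g(y), g^2(y), \dots$ all lie in $\inter C$, one gets $g^k(y) \le g^{k-1}(y) \le \dots \le y$, i.e. $f^k(y) \le \rho^k y$; pairing with any $w \in C^* \setminus \{0\}$ (note $\langle y, w\rangle > 0$ since $y$ is interior) gives $\langle f^k(y), w\rangle \le \rho^k \langle y, w\rangle$, so $w$ is a weak upper bound with witness $y$.

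For lower bounds the difficulty is that if $y \in \partial C$ we cannot take $x = y$, and the obvious substitute fails: if $x \in \inter C$ with $x \ge y$ and $w \in C^*$ with $\langle y, w\rangle > 0$, then $g^k(x) \ge y$ for all $k$ forces only $\langle f^k(x), w\rangle \ge \rho^k \langle y, w\rangle$, whereas $\langle x, w\rangle \ge \langle y, w\rangle$ points the wrong way. I expect this normalization mismatch to be the main obstacle, and the way around it is to build $x$ and $w$ with $\langle x, w\rangle = \langle y, w\rangle$ exactly. If $y \in \inter C$ just take $x = y$ and any $w \in C^* \setminus \{0\}$. If $y \in \partial C$, fix $v \in \inter C$; since $C \cap (-C) = \{0\}$ and $y \ne 0$, the ray $\{ v - sy : s \ge 0 \}$ is not contained in $C$, so $s^* := \max\{ s \ge 0 : v - sy \in C \}$ is finite (and the maximum is attained, with $s^* > 0$, because $v \in \inter C$ and $C$ is closed). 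Set $z := v - s^* y$: maximality of $s^*$ puts $z$ on $\partial C$, and $z \ne 0$ because the interior point $v$ cannot be a scalar multiple of the boundary point $y$. Put $x := y + z \in C$. A quick check shows $x \in \inter C$: if some $w_0 \in C^* \setminus \{0\}$ annihilated $x$, then, both $\langle y,w_0\rangle$ and $\langle z,w_0\rangle$ being nonnegative, it would annihilate $z$, hence $\langle v, w_0 \rangle = s^* \langle y, w_0\rangle = 0$, impossible for $v$ interior.

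To finish, choose a nonzero supporting functional $w \in C^* \setminus \{0\}$ at the boundary point $z$, so $\langle z, w\rangle = 0$ and therefore $\langle x, w\rangle = \langle y, w\rangle$ (and $\langle x, w\rangle > 0$ since $x \in \inter C$). Because $x \ge y$ and $y$ is a lower bound for $g$, induction — applying the lower-bound property to $g^{k}(x) \in \inter C$ at each step — gives $g^k(x) \ge y$, i.e. $f^k(x) \ge \rho^k y$, for all $k \ge 0$; pairing with $w \in C^*$ yields $\langle f^k(x), w\rangle \ge \rho^k \langle y, w\rangle = \rho^k \langle x, w\rangle$, so $w$ is a weak lower bound with witness $x$. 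Apart from producing the pair $(x, w)$ with $x \in \inter C$, $x \ge y$, and $\langle x - y, w\rangle = 0$, every step is routine manipulation of order-preservation and degree-one homogeneity.
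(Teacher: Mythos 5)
Your proof is correct and the lower-bound half uses essentially the same device as the paper: produce a point $x \in \inter C$ and a scalar multiple $\lambda y$ so that $x - \lambda y \in \partial C$, then take a supporting functional $w$ there (the paper normalizes by scaling $y$ for an arbitrary $x$, whereas you normalize by constructing a particular $x = y + z$ so that $\lambda = 1$, but the idea is identical). Your upper-bound case is a genuine simplification over the paper's ``essentially the same'': since an upper bound is by definition interior, you can take $x = y$ itself and any $w \in C^* \setminus \{0\}$, avoiding the supporting-functional construction entirely.
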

\begin{proof}
Suppose that $y$ is a lower bound for $f$.  Then for every $x \in \inter C$, there is a maximal $\lambda > 0$ such that $x \geq \lambda y$ and it is clear that $\lambda y$ is also a lower bound.  Thus $x - \lambda y \in \partial C$ where $\partial C$ denotes the boundary of $C$.  We may choose $w \in C^* \backslash \{0 \}$ such that $\langle x - \lambda y,w \rangle = 0$ and $\langle x,w \rangle = \langle \lambda y,w \rangle$.  Since $\lambda y$ is a lower bound, $f^k(x) \geq \rho^k \lambda y$ for all $k \in \N$.  Therefore $\langle f^k(x), w \rangle \geq \rho^k \langle \lambda y,w \rangle = \rho^k \langle x,w \rangle$.   The proof for upper bounds is essentially the same. 
\end{proof}

We will prove that for any proper closed cone $C$ and any order-preserving homogeneous of degree one map $f: \inter C \rightarrow \inter C$, the map $f$ has a lower bound.  For order-preserving homogeneous of degree one map on the standard cone $\R^n_+ = \{ x \in \R^n : x_i \geq 0 \text{ for all } i \in \{1,...,n\} \}$, we show in section \ref{section3} that there is a formal eigenvector that is almost an upper bound for the map.  In particular this will establish a weak upper bound for the iterates of any such map on the standard cone.  We then extend this result to show that on any polyhedral cone there is always a weak upper bound for the iterates of any order-preserving homogeneous of degree one map defined on the interior.

\section{Lower Bounds} \label{section2}

Let $C$ be a proper cone in $\R^n$ and let $f:\inter C \rightarrow \inter C$ be order-preserving and homogeneous of degree one.  It is known \cite[Theorem 2.10]{BuNuSp03} that if $C$ is a polyhedral cone, then $f$ has a continuous extension to $C$ that is order-preserving and homogeneous of degree one.  By the Krein-Rutman theorem this extension has an eigenvector $y \in C \backslash \{ 0 \}$ with eigenvalue equal to the cone spectral radius $\rho(f)$.  This proves that order-preserving, homogeneous of degree one self-maps of the interior of a closed polyhedral cone must have a lower bound.  

When the cone $C$ is not polyhedral, the map $f$ might not extend continuously to the boundary of $C$.  In this case, however, there must still be a lower bound.

\begin{theorem} \label{lowerbound}
Let $C$ be any proper closed cone in $\R^n$ and suppose $f: \inter C \rightarrow \inter C$ is order-preserving and homogeneous of degree one with cone spectral radius $\rho(f) = \rho$.  There exists $y \in C \backslash \{ 0 \}$ such that for any $x \in \inter C$ with $x \geq y$, $f^k(x) \geq \rho^k y$ for all $k \in \N$. 
\end{theorem}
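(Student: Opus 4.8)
The plan is to approximate $f$ by maps that \emph{do} extend continuously to all of $C$, apply the Krein--Rutman theorem to each approximant, and then pass to a limit. Fix $v \in \inter C^*$ and $x_0 \in \inter C$, and for $\epsilon > 0$ define
\[
f_\epsilon(x) = f\bigl(x + \epsilon \langle x,v \rangle x_0\bigr), \qquad x \in C .
\]
The point of this particular perturbation is that when $x \in C \setminus \{0\}$ we have $\langle x,v\rangle > 0$, so $x + \epsilon \langle x,v\rangle x_0 \in \inter C$; hence $f_\epsilon$ is well defined and continuous on $C \setminus \{0\}$ (a composition of the continuous map $x \mapsto x + \epsilon\langle x,v\rangle x_0$ with $f$, which is continuous on $\inter C$), and homogeneity together with compactness of the set of unit vectors in $C$ shows $f_\epsilon(x) \to 0$ as $x \to 0$, so $f_\epsilon$ extends continuously to $C$ with $f_\epsilon(0) = 0$. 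One checks directly that $f_\epsilon$ is order-preserving, homogeneous of degree one, and maps $C$ into $C$, so by the Krein--Rutman theorem \cite[Corollary 5.4.2]{LemmensNussbaum} there is $y_\epsilon \in C \setminus \{0\}$, which we normalize so that $\|y_\epsilon\| = 1$, with $f_\epsilon(y_\epsilon) = \rho_\epsilon y_\epsilon$ where $\rho_\epsilon := \rho_C(f_\epsilon)$.

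Next I would show $\rho_\epsilon \to \rho$ as $\epsilon \to 0$. Since $x + \epsilon\langle x,v\rangle x_0 \geq x$ for $x \in \inter C$, the map $f_\epsilon$ dominates $f$ pointwise on $\inter C$, and as both maps are order-preserving, $f_\epsilon^k(x) \geq f^k(x)$ for all $k$; normality of $C$ then gives $\rho_\epsilon \geq \rho$. For the matching upper bound, the standard Collatz--Wielandt estimate provides, for each $\delta > 0$, a point $x^* \in \inter C$ with $f(x^*) \leq (\rho + \delta) x^*$ (for instance $x^* = \sum_{i=0}^{N-1} (\rho + \delta)^{-i} f^i(x_0)$ with $N$ large). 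Writing $x_0 \leq t x^*$ for a suitable $t > 0$, we get
\begin{align*}
f_\epsilon(x^*) &= f\bigl(x^* + \epsilon\langle x^*,v\rangle x_0\bigr) \leq f\bigl((1 + \epsilon t \langle x^*,v\rangle) x^*\bigr) \\
 &= (1 + \epsilon t\langle x^*,v\rangle)\, f(x^*) \leq (1 + \epsilon t\langle x^*,v\rangle)(\rho + \delta)\, x^* ,
\end{align*}
so $\rho_\epsilon \leq (1 + \epsilon t\langle x^*,v\rangle)(\rho + \delta)$; letting $\epsilon \to 0$ and then $\delta \to 0$ yields $\limsup_{\epsilon \to 0} \rho_\epsilon \leq \rho$.

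Now choose a sequence $\epsilon_j \to 0$ along which $y_{\epsilon_j} \to y$ for some $y \in C$ with $\|y\| = 1$ (so $y \neq 0$); then $\rho_{\epsilon_j} \to \rho$ as well. I claim this $y$ works. Fix an arbitrary $x \in \inter C$ with $x \geq y$ and fix $\delta > 0$. Because $x \in \inter C$, the vector $\delta x$ lies in the interior of $C$ and absorbs the vanishing perturbation: writing $(1+\delta)x - y_{\epsilon_j} = (x - y) + \bigl(\delta x + (y - y_{\epsilon_j})\bigr)$, the first summand is in $C$ since $x \geq y$, and the second is in $\inter C$ once $\|y - y_{\epsilon_j}\|$ is small, so $(1+\delta)x \geq y_{\epsilon_j}$ for all large $j$. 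Applying the order-preserving homogeneous map $f_{\epsilon_j}$ gives $(1+\delta) f_{\epsilon_j}(x) \geq f_{\epsilon_j}(y_{\epsilon_j}) = \rho_{\epsilon_j} y_{\epsilon_j}$. Letting $j \to \infty$ and using $x + \epsilon_j \langle x,v\rangle x_0 \to x \in \inter C$ together with the continuity of $f$ on $\inter C$, we obtain $f_{\epsilon_j}(x) \to f(x)$, so (since $C$ is closed) $(1+\delta) f(x) \geq \rho y$; letting $\delta \to 0$ gives $f(x) \geq \rho y$. An induction on $k$ finishes: the case $k=1$ is what we just proved, and if $f^k(x) \geq \rho^k y$ then $\rho^{-k} f^k(x) \in \inter C$ (recall $\rho > 0$) with $\rho^{-k} f^k(x) \geq y$, so applying the case $k=1$ to this point and using homogeneity gives $f^{k+1}(x) \geq \rho^{k+1} y$.

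The main obstacle is the very first step. Since $C$ need not be polyhedral, $f$ itself need not extend continuously to $\partial C$ --- precisely the difficulty that makes the statement nontrivial --- so Krein--Rutman cannot be applied to $f$ directly, and the perturbation must simultaneously dominate $f$ (to keep $\rho_\epsilon \geq \rho$) and push the boundary into $\inter C$ before $f$ acts (to recover continuity on all of $C$); the substitution $x \mapsto x + \epsilon\langle x,v\rangle x_0$ is designed to do both. The only other point that needs care is the convergence $\rho_\epsilon \to \rho$, which leans on the Collatz--Wielandt description of the cone spectral radius; everything else is routine closedness-and-continuity bookkeeping, the one small trick being the $(1+\delta)$-cushion used in the limiting step.
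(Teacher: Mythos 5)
Your high-level strategy is the same as the paper's: regularize $f$ by a one-parameter family $f_\epsilon$ for which an eigenvector exists, then pass to a limit as $\epsilon \to 0$. But the implementation differs in two interesting ways, and there is one genuine (though removable) error.

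\emph{Different perturbation.} The paper uses the additive perturbation $f_\epsilon(x) = f(x) + \epsilon\langle x,v\rangle x_0$ and invokes \cite[Theorem 5.4.1]{LemmensNussbaum} to produce an eigenvector $y_\epsilon$ in $\inter C$. You instead perturb the \emph{argument}, $f_\epsilon(x) = f(x + \epsilon\langle x,v\rangle x_0)$, and observe this map is well-defined and continuous on all of $C$ (with $f_\epsilon(0)=0$), so the plain Krein--Rutman theorem applies. Both perturbations are sound; yours has the virtue of requiring only the basic Krein--Rutman statement rather than the sharper eigenvector-in-the-interior result, at the modest cost of having to verify the extension to $C$.

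\emph{The Collatz--Wielandt digression is flawed and unnecessary.} Your explicit sub-eigenvector $x^* = \sum_{i=0}^{N-1}(\rho+\delta)^{-i} f^i(x_0)$ does \emph{not} in general satisfy $f(x^*) \le (\rho+\delta)x^*$, because you implicitly used subadditivity ($f(a+b) \le f(a)+f(b)$), which order-preserving homogeneous-of-degree-one maps need not satisfy; for instance $f(x_1,x_2)=(\sqrt{x_1 x_2},\sqrt{x_1 x_2})$ fails it. The abstract Collatz--Wielandt bound you cite is true, but the construction you offer as evidence is specific to linear/subadditive maps. Fortunately, you do not need $\rho_\epsilon \to \rho$ at all: you correctly proved $\rho_\epsilon \ge \rho$, so along your subsequence $\rho_{\epsilon_j}\to r$ with $r\ge\rho$, and since $y\in C$ one has $ry\ge \rho y$; the limit step then gives $(1+\delta)f(x)\ge ry\ge\rho y$ directly. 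The paper does exactly this --- it never proves $\rho_\epsilon\to\rho$, only that the limit $r$ satisfies $r\ge\rho$. Deleting the Collatz--Wielandt paragraph and replacing your parenthetical ``then $\rho_{\epsilon_j}\to\rho$'' with ``along a further subsequence $\rho_{\epsilon_j}\to r\ge\rho$'' leaves a correct proof.

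\emph{Different limiting argument.} The paper tracks a scaling factor $\lambda_\epsilon$ (the largest $\lambda$ with $x\ge\lambda y_\epsilon$), shows $\inf_\epsilon\lambda_\epsilon>0$, and then sends $f_\epsilon^k(x)\ge\lambda_\epsilon\rho_\epsilon^k y_\epsilon$ to the limit for each $k$. You instead use a $(1+\delta)$-cushion to get the $k=1$ inequality in the limit, then close by induction on $k$. Your route is a bit cleaner in that it avoids having to argue $f_\epsilon^k(x)\to f^k(x)$ directly; the induction handles the iterates using only the $k=1$ case and homogeneity.

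In summary: correct modulo the subadditivity slip, which is on a step you didn't actually need. The essential ideas match the paper; the perturbation and the finishing step are executed differently, both defensibly so.
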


\begin{proof}
Fix $v \in \inter C^*$ and $x_0 \in \inter C$ and let $f_\epsilon(x) = f(x) + \epsilon \langle x,v \rangle x_0$ for $\epsilon >0$.  By \cite[Theorem 5.4.1]{LemmensNussbaum}, each map $f_\epsilon$ has an eigenvector $y_\epsilon \in \inter C$ with $||y_\epsilon||=1$ and $f_\epsilon(y_\epsilon) = \rho_\epsilon y_\epsilon$ where $\rho_\epsilon$ is the cone spectral radius of $f_\epsilon$. Furthermore, $\lim_{\epsilon \rightarrow 0} \rho_\epsilon = r$ exists.  Since $\rho_\epsilon > \rho$ for every $\epsilon$, it follows that $r \geq \rho$.  We may choose a sequence $\{\epsilon_i\}_{i \in \N}$ such that $\epsilon_i \rightarrow 0$ and $y_{\epsilon_i} \rightarrow y$ where $y \in C$.  

For a fixed $x \in \inter C$ with $x \ge y$, and for each $y_\epsilon$, there exists a maximal  $\lambda_\epsilon>0$ such that $x \geq \lambda_\epsilon y_\epsilon$.  In particular, $x - \lambda_\epsilon y_\epsilon \in \partial C$, where $\partial C$ denotes the boundary of $C$.  We claim that $\inf_{\epsilon > 0} \lambda_\epsilon > 0$.  Since $x \in \inter C$, there exists $\delta > 0$ such that $x - \delta z \in \inter C$ for all $z \in \R^n$ with $||z|| =1$.  Thus $x - \delta y_\epsilon \in \inter C$ for all $\epsilon > 0$.  So $x \geq \delta y_\epsilon$ for all $\epsilon$ and therefore $\inf \lambda_\epsilon \geq \delta$.  By taking a refinement if necessary, we may assume that $\lambda_{\epsilon_i} \rightarrow \lambda$ where $\lambda \geq \inf_{\epsilon>0} \lambda_\epsilon > 0$.  Then $x-\lambda_{\epsilon_i} y_{\epsilon_i}$ converges to $x - \lambda y$ and since $\partial C$ is a closed set, $x - \lambda y \in \partial C$.  Given that $x \geq y$, $\lambda \geq 1$. 

Since $x \geq \lambda_\epsilon y_\epsilon$ for each $\epsilon$, $f^k_\epsilon(x) \geq \lambda_\epsilon f^k_\epsilon(y_\epsilon) = \lambda_\epsilon \rho^k_\epsilon y_\epsilon$ for any $k \in \N$.  Taking a limit, we see that $f^k(x) \geq \lambda r^k y \geq \rho^k y$ for all $k \in \N$.  
\end{proof}

It has been noted that order-preserving homogeneous of degree one maps on the interior of symmetric cones have weak lower bounds \cite[Corollary 21]{GaubertVigeral}. The result above implies that both weak lower bounds and lower bounds will exist for order-preserving homogeneous of degree one self-maps of the interior of any proper closed cone.   

\section{Upper Bounds on the Standard Cone} \label{section3}

Even in the standard cone an order-preserving homogeneous of degree one map $f: \inter \R^n_+ \rightarrow \inter \R^n_+$ might not have an upper bound.  For example, the matrix $A = \begin{bmatrix} 1 & 1 \\ 0 & 1 \end{bmatrix}$ defines a linear transformation on the standard cone $\R^2_+$ with spectral radius 1, but the iterates of any vector with positive entries under powers of $A$ forms an unbounded sequence.  The following theorem shows that if we relax the definition of an upper bound slightly, then we get a kind of upper bound that is not in $C$ but has all of the other properties of an upper bound for the map $f$.  

\begin{theorem} \label{formaleig}
Let $f: \inter \R^n_+ \rightarrow \inter \R^n_+$ be order-preserving, homogeneous of degree one.  Let $\rho=\rho(f)$ be the cone spectral radius of $f$.  The map $f$ extends continuously to an order-preserving map on $(0,\infty]^n$ and there exists $z \in (0,\infty]^n$ such that $z$ has at least one finite entry and $f(z) = \tilde{\rho} z$ where $\tilde{\rho} \leq \rho$.  In particular, if $x \leq z$, then $f(x) \leq \rho z$.  
\end{theorem}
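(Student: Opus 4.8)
The plan is to construct $z$ as a limit of (rescaled) eigenvectors of the perturbed maps $f_\epsilon$, carefully tracking which coordinates blow up. First I would extend $f$ to $(0,\infty]^n$: for a point $z$ with some entries equal to $+\infty$, partition $\{1,\dots,n\}$ into the finite coordinates $S$ and the infinite coordinates $T$; monotonicity forces the value of $f$ at $z$ to be the supremum of $f(x)$ over $x \le z$ with $x$ finite, equivalently $\lim_{t\to\infty} f(z^{(t)})$ where $z^{(t)}$ agrees with $z$ on $S$ and has value $t$ on $T$. Because $f$ is order-preserving this limit exists in $(0,\infty]^n$, is independent of how the infinite entries are sent to infinity, and the extended map is still order-preserving and homogeneous of degree one (homogeneity on $(0,\infty]^n$ with the convention $\lambda\cdot\infty=\infty$). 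I would record these facts as a short preliminary lemma.

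Next, following the proof of Theorem \ref{lowerbound}, fix $v \in \inter C^*$, $x_0 \in \inter \R^n_+$, and set $f_\epsilon(x) = f(x) + \epsilon\langle x,v\rangle x_0$. Each $f_\epsilon$ has an eigenvector $y_\epsilon \in \inter \R^n_+$ with $f_\epsilon(y_\epsilon) = \rho_\epsilon y_\epsilon$, $\rho_\epsilon \ge \rho$, and $\rho_\epsilon \to r \ge \rho$ as $\epsilon \to 0$. Rather than normalizing $\|y_\epsilon\| = 1$, I would normalize so that $\min_i (y_\epsilon)_i = 1$ — this is the crucial change of normalization. Then along a subsequence $\epsilon_i \to 0$ we get $y_{\epsilon_i} \to z \in [1,\infty]^n$, where $z$ has at least one finite (in fact, equal to $1$) entry by the normalization, and every entry is at least $1$. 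The eigenvalue equation $f(y_{\epsilon_i}) + \epsilon_i\langle y_{\epsilon_i},v\rangle x_0 = \rho_{\epsilon_i} y_{\epsilon_i}$ should pass to the limit: on the finite coordinates of $z$ the left side converges to $f(z)$ coordinatewise by continuity of the extension (this needs a little care — see below), the perturbation term $\epsilon_i\langle y_{\epsilon_i},v\rangle x_0$ I would like to have go to zero on the finite coordinates, and on the infinite coordinates both sides are $+\infty$. This would give $f(z) = r z$, and then one sets $\tilde\rho = r$... but wait — the theorem asserts $\tilde\rho \le \rho$, whereas $r \ge \rho$. The resolution is that $r = \rho$: one shows $\rho_\epsilon \to \rho$. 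Actually I expect $r > \rho$ is possible a priori, so instead I would argue directly that the limiting eigenvalue along this particular normalization is $\le \rho$, or alternatively show $r = \rho$ using that the $y_\epsilon$ eigenvector, when it stays bounded, forces $\rho_\epsilon \to \rho$, and when it blows up one extracts $z$ with eigenvalue $\tilde\rho \le \rho$ by a separate compactness/monotonicity argument. The cleanest route: show $\langle f_\epsilon^k(x_0), v'\rangle$ grows at rate $\rho_\epsilon^k$, compare with the unperturbed rate $\rho^k$, and conclude $\rho_\epsilon \downarrow \rho$; then $r = \rho$ and $\tilde\rho = \rho$, which certainly satisfies $\tilde\rho \le \rho$. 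The final clause, "$x \le z \Rightarrow f(x) \le \rho z$", is then immediate: $f$ is order-preserving so $f(x) \le f(z) = \tilde\rho z \le \rho z$.

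The main obstacle I anticipate is the limiting step $f(y_{\epsilon_i}) \to f(z)$ on the finite coordinates. The extension of $f$ to $(0,\infty]^n$ is continuous in the sense that $f(z^{(t)}) \to f(z)$ as we push the designated infinite coordinates to infinity, but $y_{\epsilon_i}$ approaches $z$ in a less controlled way — its "finite" coordinates may temporarily be large and its "infinite" coordinates grow at different rates. To handle this I would sandwich: for any $t$, eventually $y_{\epsilon_i} \ge (\text{a point agreeing with } z \text{ on } S, \text{ equal to } t \text{ on } T$), giving $\liminf_i f(y_{\epsilon_i})_j \ge f(z^{(t)})_j \to f(z)_j$ on finite coordinates $j$; for the reverse inequality I would use that the finite coordinates of $y_{\epsilon_i}$ stay bounded (this requires a uniform bound, which should follow from $\min_i (y_{\epsilon_i})_i = 1$ together with the Hilbert-metric nonexpansiveness of $f$ — the oscillation of $y_\epsilon$ is controlled, so coordinates that don't go to infinity stay bounded) and upper semicontinuity of $f$ along bounded pieces. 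A second, smaller obstacle is ensuring the perturbation term $\epsilon_i \langle y_{\epsilon_i}, v\rangle x_0$ vanishes in the limit on finite coordinates even though $\langle y_{\epsilon_i},v\rangle$ may blow up; this needs $\epsilon_i \langle y_{\epsilon_i},v\rangle \to 0$, which I would arrange by passing to a further subsequence along which $\epsilon_i$ goes to zero fast enough relative to $\|y_{\epsilon_i}\|$ — possible since for each fixed normalization scale we are free to choose $\epsilon_i$ as small as we like.
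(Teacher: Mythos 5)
Your approach is genuinely different from the paper's, which uses a slicker conjugation trick: the paper defines the entrywise reciprocal map $L$, observes that $L\circ f\circ L$ is again order-preserving and homogeneous of degree one on $\inter\R^n_+$, extends it continuously to all of $\R^n_+$ via the cited result of Burbanks--Nussbaum--Sparrow, applies Krein--Rutman to get an eigenvector $y \in \R^n_+\setminus\{0\}$ of $L\circ f\circ L$, and then sets $z = L(y)$. The conjugation simultaneously produces the continuous extension of $f$ to $(0,\infty]^n$ (by conjugating the known extension to $\R^n_+$ through the homeomorphism $L$) and converts the ``eigenvector with some infinite coordinates'' problem into an ordinary boundary eigenvector problem. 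Lemma~\ref{r(LTL)} then gives $\tilde\rho = \rho(L\circ f\circ L)^{-1} \le \rho(f)$.

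There is a genuine gap in your plan that I do not see how to close. You need the perturbation term $\epsilon_i\langle y_{\epsilon_i}, v\rangle x_0$ to vanish in the limit in order to obtain the equality $f(z)=\tilde\rho z$, and your proposed fix --- pass to a subsequence along which $\epsilon_i$ tends to zero fast relative to $\|y_{\epsilon_i}\|$ --- is not available: once you impose the normalization $\min_i (y_\epsilon)_i = 1$, the eigenvector $y_\epsilon$ and hence $\|y_\epsilon\|$ is a function of $\epsilon$, so $\epsilon\langle y_\epsilon,v\rangle$ is a fixed quantity for each $\epsilon$ and subsequencing cannot shrink it. From the eigenvalue equation on the coordinate achieving the min you can bound $\epsilon\langle y_\epsilon,v\rangle \le \rho_\epsilon/(x_0)_{j_0}$, so the term is bounded, but nothing forces it to tend to $0$. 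If it has a positive limit $c$ along your subsequence, the limiting relation on a finite coordinate $j$ reads $f(z)_j + c(x_0)_j = r z_j$, which is not proportionality, so $z$ is not a formal eigenvector. What your scheme does deliver, after establishing $r=\rho$ (which is correct here because the cone spectral radius is continuous on polyhedral cones), is the inequality $f(z)\le \rho z$ by simply dropping the nonnegative perturbation term --- enough for the ``in particular'' clause but not for the stated eigenvalue identity $f(z)=\tilde\rho z$. A second, smaller soft spot is the continuity of the extension of $f$ to $(0,\infty]^n$: your sandwich only gives the $\liminf$ direction, and the $\limsup$ direction requires an upper control on $y_{\epsilon_i}$ that you would have to supply (the paper sidesteps this entirely via the $L$-conjugation and the known continuous extension to $\R^n_+$). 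I would suggest adopting the reciprocal-map argument rather than repairing the perturbation route.
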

We refer to $z$ in the theorem above as a \textit{formal eigenvector} of $f$. A formal eigenvector satisfies all of the properties of an upper bound for $f$, except that it is not an element of $\R^n_+$.  Before we prove Theorem \ref{formaleig}, we need some definitions and a lemma.  

In what follows, let $L:[0,\infty]^n$ be the entry-wise reciprocal map:
\begin{equation} \label{L}
(Lx)_i = 
\begin{cases} 
	x_i^{-1} & \text{if~}x_i \in (0,\infty) \\
	x_i = \infty & \text{if~}x_i = 0 \\
	x_i = 0 & \text{if~}x_i = \infty 
\end{cases}
\end{equation}
The set $[0,\infty]^n$ has the obvious partial order $x \le y$ if $x_i \le y_i$ for all $i \in \{1,\ldots n\}$.  Note that $L$ is order-reversing with respect to this partial ordering.  Furthermore, $L(\lambda x) = \lambda^{-1} x$ for all $\lambda \in (0,\infty)$ so $L$ is homogeneous of degree $-1$ on $\inter \R^n_+$.    
\begin{lemma} \label{r(LTL)}
Let $f: \SC \rightarrow \SC$ be an order-preserving homogeneous of degree one map and let $L$ be given by (\ref{L}).  Then $\rho(L \circ f \circ L)^{-1} \leq \rho(f)$.  
\end{lemma}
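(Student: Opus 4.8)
\emph{Proof proposal.} Write $g = L \circ f \circ L$. The plan is to show directly that $\rho(g)\,\rho(f) \ge 1$, which, since both quantities are positive, is exactly the stated inequality $\rho(L\circ f\circ L)^{-1}\le \rho(f)$. First I would record the structural facts about $g$. Since $L$ restricts to an order-reversing, homogeneous of degree $-1$ bijection of $\SC$ onto itself with $L\circ L = \mathrm{id}$ on $\SC$, the composition $g = L\circ f\circ L$ is an order-preserving, homogeneous of degree one self-map of $\SC$, so its cone spectral radius $\rho(g)$ is well defined as in \eqref{spectralRadiusEQ}. Moreover, because $L$ is an involution on $\SC$, an easy induction gives
\[
g^k \;=\; L\circ f^k\circ L \qquad\text{for all } k\in\N .
\]

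Next I would pass to the $\ell^\infty$ norm $\|\cdot\|_\infty$, which is harmless since the limit defining the cone spectral radius is independent of the choice of norm on $\R^n$. For any $v\in\SC$ one has $\|Lv\|_\infty = \max_i v_i^{-1} = \bigl(\min_i v_i\bigr)^{-1}$. Taking the base point $\one$ (note $L\one=\one$) and using $g^k(\one) = L\bigl(f^k(\one)\bigr)$, this gives
\[
\|g^k(\one)\|_\infty \;=\; \frac{1}{\min_i \bigl(f^k(\one)\bigr)_i}
\;\ge\; \frac{1}{\max_i \bigl(f^k(\one)\bigr)_i}
\;=\; \frac{1}{\|f^k(\one)\|_\infty},
\]
where the inequality is simply $\min_i a_i \le \max_i a_i$ applied to the positive vector $f^k(\one)$. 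Hence $\|g^k(\one)\|_\infty \cdot \|f^k(\one)\|_\infty \ge 1$ for every $k$; taking $k$-th roots and letting $k\to\infty$ yields $\rho(g)\,\rho(f)\ge 1$, which is the claim.

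The argument is short because the only genuine inequality used is the trivial $\min\le\max$; the points that need care are the two structural observations in the first paragraph — that $L$ is an involution on $\SC$, so that $g^k = L\circ f^k\circ L$, and that $g$ really maps $\SC$ into itself so that $\rho(g)$ is defined — together with the norm-independence of the limit in \eqref{spectralRadiusEQ}. I do not expect a serious obstacle here; the main thing is to keep the reciprocal bookkeeping straight so that the $\min$/$\max$ swap falls on the correct side of the inequality and the final limit comes out as $\rho(g)\rho(f)\ge 1$ rather than its reciprocal.
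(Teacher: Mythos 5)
Your proof is correct and follows essentially the same route as the paper's: reduce $g^k = L\circ f^k\circ L$, bound $\|Lz\|\,\|z\|$ below by a constant, take $k$-th roots, and let $k\to\infty$. The only difference is cosmetic — the paper works with the Euclidean norm and Cauchy--Schwarz to get $\|Lz\|\,\|z\|\ge n$, whereas you use the $\ell^\infty$ norm and the trivial $\min\le\max$ to get $\|Lz\|_\infty\|z\|_\infty\ge 1$; both yield the same limit.
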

\begin{proof}
For any $x \in \inter \R^n_+$ and $k \in \N$, $(L \circ f \circ L)^k(x) = L \circ f^k \circ L(x)$.  By the Cauchy-Schwarz inequality, $||Lz|| ||z|| \geq n$ for all $z \in \inter \R^n_+$.  Thus $|| L \circ f^k \circ L(x)||^{1/k} ||f^k(L(z))||^{1/k} \geq n^{1/k}$.  Taking the limit as $k \rightarrow \infty$ we see that $\rho(f) \ge \rho(L \circ f \circ L)^{-1}$.  
\end{proof}

\begin{proof}[Proof of Theorem \ref{formaleig}.] Let $L: \SC \rightarrow \SC$ be given by (\ref{L}).  The map $L \circ f \circ L$ is an order-preserving homogeneous of degree one map on $\SC$.  By \cite[Theorem 2.10]{BuNuSp03}, $L \circ f \circ L$ extends continuously to an order-preserving map on the entire cone $\R^n_+$.  By the Krein-Rutman theorem, the continuous extension of $L\circ f \circ L$ must have an eigenvector $y \in \R^n_+\backslash \{ 0 \}$ with eigenvalue $\tilde{\rho}^{-1}$ where $\tilde{\rho} = \rho(L \circ f \circ L)^{-1}$.  By Lemma \ref{r(LTL)}, $\tilde{\rho} \leq \rho = \rho(f)$.  Since $L$ is a continuous order-reversing bijection from $\R^n_+$ onto $(0,\infty]^n$, it follows that $f$ extends continuously to an order-preserving map on $(0,\infty]^n$.  Let $z = L(y)$.  Since $y \neq 0$, at least one entry of $z$ is finite and $f(z) = L \circ L \circ f(L(y)) = L(L \circ f \circ L (y)) = L(\tilde{\rho}^{-1}(y)) = \tilde{\rho} z$.   
\end{proof}

The continuous extension of an order-preserving homogeneous of degree one map to the set $(0,\infty]^n$ is the crucial insight in the proof above.  This extension is noted in the context of order-preserving additively homogeneous maps in \cite{AkianGaubertGuterman11}.  

\begin{example}
Let $J_n(\lambda)$ denote the $n$-by-$n$ Jordan matrix with eigenvalue $\lambda > 0$, 
$$J_n(\lambda) = \begin{bmatrix} \lambda & 1 &  &  0 \\  
 & \ddots & \ddots &  &    \\
  & & \ddots & 1 \\
 0  & & & \lambda 
 \end{bmatrix}$$
The linear transformation corresponding to $J_n(\lambda)$ maps the interior of $\R^n_+$ into itself.  Up to scalar multiplication, the unique Perron eigenvector is $e_1 = [1, 0, \ldots,0]^T$, and the spectral radius of $J_n(\lambda)$ is $\lambda$.  The formal eigenvector corresponding to $J_n(\lambda)$ (which is unique up to scalar multiplication) is $[\infty, \ldots, \infty, 1]$.  Thus, Theorem \ref{formaleig} tells us that $(J_n(\lambda)^k(x))_n \leq \lambda^k x_n$ for all $k \in \N$.   
\end{example}

\begin{example}
The ``DAD maps" were introduced in \cite{Menon} to solve the problem of trying to find diagonal matrices $D_1$ and $D_2$ for a nonnegative $m$-by-$n$  matrix $A$ such that $D_1 A D_2$ is doubly stochastic.  Such matrices exist if and only if the nonlinear map $f:x \mapsto LA^TLAx$ has an eigenvector with all positive entries. This in turn occurs if and only if the matrix $A$ is a direct sum of fully indecomposible matrices \cite{BrualdiParterSchneider}.   

Note that the map $f$ is order-preserving and homogenous of degree one and it is defined on the interior of $\R^n_+$.  Therefore it must extend continuously to the boundary of $\R^n_+$, and this continuous extension can be computed using the convention that $(0)^{-1} = \infty$ and $(\infty)^{-1} = 0$.  

Assume that $A$ is an $m$-by-$n$ nonnegative matrix such that every row and column contains at least one non-zero entry.  By permuting the rows and columns of $A$, we may assume \cite{Katirtzoglou} that $A$ has the following form, 
$$ A = \begin{bmatrix} A_1 & B_{12} & \ldots & B_{1 \sigma} \\ & A_2 & \ldots & B_{2 \sigma} \\ & 0 & \ddots & \vdots \\ & & & A_\sigma \end{bmatrix}$$
where each $A_i$ is an $m_i$-by-$n_i$ matrix with a corresponding DAD-map $f_i(x) = LA_i^TLA_ix$ with a unique eigenvector $v_i$ (up to scaling) with all positive entries in $\R^{n_i}$ and eigenvalue $\lambda_i = n_i/m_i$.  Furthermore $\lambda_1 \geq \lambda_2 \geq \ldots \geq \lambda_\sigma$.  If $A$ is indecomposable (that is, there are no permutation matrices $P$ and $Q$ such that $PAQ$ is a direct sum of two matrices), then the form above is unique.  Note that the eigenvector $v_\sigma$ can be extended to a formal eigenvector for the map $f(x) = LA^TLAx$ by letting 
$$v = \begin{bmatrix} \infty \\ \vdots \\ \infty \\ v_\sigma \end{bmatrix}.$$
Then $f(v) = LA^TLAv = \lambda_\sigma v$.  Note that the cone spectral radius of $f$ is $\lambda_1$ \cite{Katirtzoglou} so the eigenvalue corresponding to the formal eigenvector may be strictly less than the cone spectral radius.  
\end{example}

\section{Weak Upper Bounds}
\begin{theorem} \label{polybound}
Let $f:\inter C \rightarrow \inter C$ be order-preserving and homogeneous of degree one with $\rho(f) = \rho$.  If $C$ is a polyhedral cone and $x \in \inter C$, then there is a $w \in C^*\backslash \{0 \}$ such that $\langle f^k(x), w \rangle \leq \rho^k \langle x, w \rangle$ for all $k \in \N$.  
\end{theorem}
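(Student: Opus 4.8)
The plan is to reduce the claim to the case of the standard cone and then invoke Theorem \ref{formaleig}. Since $C$ is polyhedral, its dual $C^{*}$ is finitely generated; pick nonzero generators $a_{1},\dots,a_{m}$, which span $\R^{n}$ because $C$ is pointed, and let $A\colon\R^{n}\to\R^{m}$ be the linear map with $(Ax)_{i}=\langle a_{i},x\rangle$. Then $A$ is injective, $C=\{x:Ax\ge 0\}$, and --- since $x\in\inter C$ exactly when $\langle a_{i},x\rangle>0$ for every $i$ --- the map $A$ restricts to a bijection of $\inter C$ onto $V\cap\inter\R^{m}_{+}$, where $V:=A(\R^{n})$. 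Transporting $f$ through $A$ produces
\[
g:=A\circ f\circ A^{-1}\colon V\cap\inter\R^{m}_{+}\longrightarrow V\cap\inter\R^{m}_{+},
\]
which is order-preserving and homogeneous of degree one, satisfies $g^{k}=A\circ f^{k}\circ A^{-1}$, and has $\rho(g)=\rho(f)=\rho$ because $A$ is a linear isomorphism onto $V$.

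The main step is to extend $g$ to an order-preserving, homogeneous of degree one self-map of all of $\inter\R^{m}_{+}$. For $x\in\inter\R^{m}_{+}$ put
\[
\bar g(x)_{j}:=\inf\{\,g(y)_{j}\ :\ y\in V\cap\inter\R^{m}_{+},\ y\ge x\,\},\qquad j=1,\dots,m.
\]
Because $V\cap\inter\R^{m}_{+}\ne\emptyset$, scaling any fixed element of it up or down yields points $x''\ge x\ge x'$ lying in $V\cap\inter\R^{m}_{+}$, and order-preservation of $g$ forces $0<g(x')_{j}\le\bar g(x)_{j}\le g(x'')_{j}<\infty$; hence $\bar g$ does map $\inter\R^{m}_{+}$ into itself. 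It is then routine that $\bar g$ is homogeneous of degree one, that it is order-preserving (a larger argument shrinks the set over which the infimum is taken), and that it restricts to $g$ on $V\cap\inter\R^{m}_{+}$ (there $g(x)$ is already the least element of the infimand). Finally, since $g$ maps $V\cap\inter\R^{m}_{+}$ into itself, the $\bar g$-orbit of any point of $V\cap\inter\R^{m}_{+}$ coincides with its $g$-orbit, and since the cone spectral radius does not depend on the chosen interior point, $\rho(\bar g)=\rho(g)=\rho$.

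Now apply Theorem \ref{formaleig} to $\bar g$: it extends continuously and order-preservingly to $(0,\infty]^{m}$ and possesses a formal eigenvector $\zeta\in(0,\infty]^{m}$, having at least one finite coordinate, with $\bar g(\zeta)=\tilde\rho\,\zeta$ for some $\tilde\rho\le\rho$ and with the property that $u\le\zeta$ implies $\bar g(u)\le\rho\zeta$. Iterating this last implication --- using that $\bar g$ is order-preserving on $(0,\infty]^{m}$ and that $\bar g(\rho\zeta)=\rho\tilde\rho\,\zeta\le\rho^{2}\zeta$ --- gives $\bar g^{k}(u)\le\rho^{k}\zeta$ for all $k\in\N$ whenever $u\le\zeta$. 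Given the point $x\in\inter C$ of the statement, set $z:=Ax\in V\cap\inter\R^{m}_{+}$, choose an index $j_{0}$ minimizing $\zeta_{j}/z_{j}$ over the finitely many coordinates $j$ with $\zeta_{j}<\infty$, and let $c:=\zeta_{j_{0}}/z_{j_{0}}>0$. Then $cz\le\zeta$, so $\bar g^{k}(cz)\le\rho^{k}\zeta$; but $\bar g^{k}(cz)=c\,g^{k}(z)=c\,A f^{k}(x)$ (the orbit stays in $V\cap\inter\R^{m}_{+}$), and the $j_{0}$-th coordinate of this inequality reads $c\,\langle a_{j_{0}},f^{k}(x)\rangle\le\rho^{k}\zeta_{j_{0}}=\rho^{k}c\,\langle a_{j_{0}},x\rangle$. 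Cancelling $c$, the vector $w:=a_{j_{0}}\in C^{*}\setminus\{0\}$ satisfies $\langle f^{k}(x),w\rangle\le\rho^{k}\langle x,w\rangle$ for all $k\in\N$, which is the assertion.

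I expect the only real work to lie in setting up and checking the extension $\bar g$ --- establishing that the defining infimum is finite and strictly positive, so that $\bar g$ genuinely takes values in $\inter\R^{m}_{+}$, and verifying $\rho(\bar g)=\rho$ --- while everything after that is bookkeeping on top of Theorem \ref{formaleig}. One could replace the explicit formula for $\bar g$ by an appeal to a general order-preserving extension principle, but the infimum construction is short and self-contained.
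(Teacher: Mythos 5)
Your proof is correct, but it takes a genuinely different route from the paper's. The paper argues directly by perturbation: it forms $f_\epsilon(x)=f(x)+\epsilon\langle x,v\rangle x_0$, normalizes the interior eigenvectors $x_\epsilon$ to be the smallest multiples dominating the given $x$, observes that each such $x_\epsilon$ must agree with $x$ on some extreme ray $w_\epsilon$ of $C^*$, and uses polyhedrality twice --- once to extract a subsequence with a fixed $w\in\extr C^*$ (finitely many extreme rays), and once to pass $\rho_\epsilon\to\rho$ in the resulting inequality $\langle f^k(x),w\rangle\le\rho_{\epsilon_i}^k\langle x,w\rangle$ via continuity of the cone spectral radius on polyhedral cones. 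You instead use polyhedrality to embed $\inter C$ as a linear slice $V\cap\inter\R^m_+$ of the standard cone, extend the transported map by the infimum formula $\bar g(x)_j=\inf\{g(y)_j:y\in V\cap\inter\R^m_+,\ y\ge x\}$ to a bona fide order-preserving homogeneous of degree one self-map of all of $\inter\R^m_+$ with the same spectral radius, and then invoke Theorem~\ref{formaleig} to produce a formal eigenvector $\zeta$ whose finite coordinates read off the weak upper bound. I checked the nontrivial points: $A$ is an order isomorphism onto $(V\cap\inter\R^m_+,\le_{\R^m_+})$ because the $a_i$ generate $C^*$; the infimum is finite and strictly positive by sandwiching $x$ between scaled copies of a point of $V\cap\inter\R^m_+$; $\bar g$ restricts to $g$ on $V$ so orbits and hence the spectral radius are preserved; and the iteration $\bar g^k(u)\le\rho^k\zeta$ follows from $\bar g(\zeta)=\tilde\rho\zeta\le\rho\zeta$ and order-preservation of the extension on $(0,\infty]^m$. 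Your approach has the advantage of deriving Theorem~\ref{polybound} as a corollary of the standard-cone result Theorem~\ref{formaleig}, and of avoiding any appeal to continuity of the cone spectral radius; the price is the extension construction, which the paper's perturbation argument sidesteps entirely.
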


\begin{proof}
Fix $x_0 \in \inter C$, and a linear functional $v \in \inter C^*$.  Let $f_\epsilon (x) =  f(x) + \epsilon \langle x, v\rangle x_0$ for any $x \in \inter C$.  Let $\rho_\epsilon = \rho(f_\epsilon)$.  By \cite[Theorem 5.4.1]{LemmensNussbaum}, each $f_\epsilon$ has an eigenvector $x_\epsilon \in \inter C$ with $f_\epsilon (x_\epsilon) = \rho_\epsilon x_\epsilon$.  Since we are free to scale the eigenvectors $x_\epsilon$, we will require that $x_\epsilon$ be the smallest scalar multiple of $x_\epsilon$ with the property that $x_\epsilon \geq x$.  For each $\epsilon >0$ there is a unit vector $w_\epsilon \in \extr C^*$ such that $\langle x_\epsilon, w_\epsilon \rangle = \langle x, w_\epsilon \rangle$.  Since $C$ is polyhedral, there are only finitely many extreme rays of $C^*$.  We may choose a sequence $\epsilon_i$ such that $\epsilon_i \rightarrow 0$ and for all $i \in \N$, $w_{\epsilon_i} = w$ where $w$ is a unit vector in $\extr C^*$.  Then, $\langle x_{\epsilon_i}, w \rangle = \langle x, w \rangle$ for all $i$.  We see that 
$$\langle f^k (x),w \rangle \leq \langle f^k_{\epsilon_i} (x_{\epsilon_i}), w \rangle = \rho_{\epsilon_i}^k \langle x, w \rangle.$$    Since $C$ is polyhedral, it follows that the cone spectral radius $\rho_\epsilon$ is continuous \cite[Corollary 5.5.4]{LemmensNussbaum}.  This means that $\rho_\epsilon \rightarrow \rho$ as $\epsilon \rightarrow 0$. Therefore, $\langle f^k(x), w \rangle \leq \rho^k \langle x, w \rangle$ for all $k \in \N$.  
\end{proof}

A stronger result than Theorem \ref{polybound} can be obtained for linear maps with an invariant closed cone in $\R^n$ by using the adjoint.  
\begin{theorem}
Let $C$ be a proper closed cone in $\R^n$. Let $T:C \rightarrow C$ be a linear map and let $T^*:C^* \rightarrow C^*$ denote the adjoint of $T$.  Let $\rho=\rho(T)$ be the spectral radius of $T$.  Then there is a $w \in C^* \backslash \{ 0 \}$ such that $\langle T(x),w \rangle =\rho \langle x, w \rangle$ for all $x \in C$. In particular, $w$ is a weak upper bound for the iterates of $T$.    
\end{theorem}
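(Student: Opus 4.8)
The plan is to pass to the adjoint and invoke the Krein--Rutman theorem on the dual cone $C^*$. The whole content of the statement is the observation that, because $T$ maps $C$ into $C$, the adjoint $T^*$ maps $C^*$ into $C^*$: if $w \in C^*$, then for every $x \in C$ we have $\langle x, T^*w \rangle = \langle Tx, w \rangle \geq 0$ since $Tx \in C$ and $w \in C^*$, so $T^*w$ again lies in $C^*$ by definition of the dual cone. Since $C$ is a proper closed cone, $C^*$ is also a proper closed cone, and $T^*|_{C^*}$ is continuous, order-preserving with respect to $\leq_{C^*}$, and homogeneous of degree one — precisely the kind of map to which the Krein--Rutman theorem applies.

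Carrying this out, I would first record the inclusion $T^*(C^*) \subseteq C^*$ together with the remark that $T^*|_{C^*}$ is order-preserving and homogeneous of degree one. Then I would apply the Krein--Rutman theorem (Corollary 5.4.2 of \cite{LemmensNussbaum}) to $T^*$ on $C^*$ to obtain $w \in C^* \backslash \{0\}$ with $T^*w = \rho(T^*)\,w$, where $\rho(T^*)$ is the cone spectral radius of $T^*$ acting on $C^*$. The one point needing a word of justification is the identification $\rho(T^*) = \rho$: for a linear self-map of a proper closed cone the cone spectral radius coincides with the ordinary spectral radius (the maximum modulus of an eigenvalue). One direction is Gelfand's formula together with $\|T^k x\| \leq \|T^k\|\,\|x\|$; the reverse uses the order-unit property of an interior point and normality of the cone, exactly as in the introduction's argument that $\rho(f) > 0$ (alternatively, this is standard, cf. \cite{LemmensNussbaum}). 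Since $T$ and its adjoint share the same characteristic polynomial and hence the same eigenvalues, $\rho(T^*) = \rho(T) = \rho$.

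Finally, using linearity of $T$, for every $x \in \R^n$, and in particular for every $x \in C$,
$$\langle T(x), w \rangle = \langle x, T^*w \rangle = \rho\, \langle x, w \rangle,$$
which is the claimed identity. Iterating yields $\langle T^k(x), w \rangle = \rho^k \langle x, w \rangle$ for all $k \in \N$, so for any $x \in \inter C$ the defining inequality of a weak upper bound holds with equality, and $w \in C^* \backslash \{0\}$ is the asserted weak upper bound (and, being an equality, it is simultaneously a weak lower bound). I do not expect a genuine obstacle here: the proof reduces to the single inclusion $T^*(C^*) \subseteq C^*$ followed by a direct appeal to Krein--Rutman, and the only place to be careful is matching the eigenvalue produced on $C^*$ with the number $\rho$ named in the statement, which is handled by the remark above that both equal the ordinary spectral radius.
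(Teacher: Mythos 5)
Your proof is correct and is essentially the same as the paper's: both pass to the adjoint $T^*$ on $C^*$, invoke Krein--Rutman to get an eigenvector $w$ with eigenvalue $\rho(T^*) = \rho(T)$, and conclude by the adjoint identity $\langle T(x), w\rangle = \langle x, T^*w\rangle$. You simply spell out the two facts the paper takes for granted, namely that $T^*(C^*)\subseteq C^*$ and that $\rho(T^*)=\rho(T)$.
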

\begin{proof}
Note that the spectral radius of the adjoint $\rho(T^*) = \rho(T)$.  Let $w$ be an eigenvector of $T^*$ with eigenvalue $\rho$, as is guaranteed to exist by the Krein-Rutman theorem.  For any $x \in C$, $\langle T(x),w \rangle = \langle x, T^*(w) \rangle = \rho \langle x,w \rangle$.  
\end{proof}

For an arbitrary proper closed cone $C \subset \R^n$, it is not yet known whether any order-preserving homogeneous of degree one map $f: \inter C \rightarrow \inter C$ must have a weak upper bound.  
\bibliography{UpperLowerBib}
\bibliographystyle{plain}

\end{document}